\newtheorem{theorem}{Theorem}
\newtheorem{proposition}{Proposition}
\newtheorem{lemma}{Lemma}
\newtheorem{corollary}{Corollary}
\newtheorem{remark}{Remark}
\title[Compactness and Singular points of Composition Operators]{Compactness and Singular Points of Composition Operators on Bergman Spaces}
\author{Timothy G. Clos}
\address[Timothy G. Clos]{Bowling Green State University, Department of Mathematics and Statistics, Bowling Green, Ohio}
\email{clost@bgsu.edu}
\begin{document}

\begin{abstract}
Let $\Omega\subset \mathbb{C}^n$ for $n\geq 2$ be a bounded pseudoconvex domain with a $C^2$-smooth boundary.  We study the compactness of composition operators on the Bergman spaces of smoothly bounded convex domains.  We give a partial characterization of compactness of the composition operator (with sufficient regularity of the symbol) in terms of the behavior of the Jacobian on the boundary.  We then construct a counterexample to show the converse of the theorem is false. \end{abstract}

 \maketitle
 
\section{Introduction}
Let $\Omega\subset \mathbb{C}^n$ be a bounded pseudoconvex domain.  Let $\mathcal{O}(\Omega)$ be the set of all holomorphic functions from $\Omega$ into $\mathbb{C}$.  Let $V$ be the Lebesgue volume measure on $\Omega$.  For $p\in [1,\infty)$ we define
\[A^p(\Omega):=\{f\in \mathcal{O}(\Omega):\int_{\Omega}|f|^p dV<\infty\}\] to be the $p$-Bergman space.  We denote the norm
as \[\|f\|_{p,\Omega}:=\left(\int_{\Omega}|f|^p dV\right)^{\frac{1}{p}}.\]  Most of this paper will deal with the $2$-Bergman space, which for brevity is denoted as the Bergman space.
Let $\phi:\Omega\rightarrow \Omega$ be holomorphic on $\Omega$.  That is, holomorphic in each coordinate function. Then we define the composition operator with symbol $\phi$ as
\[C_{\phi}(f)=f\circ \phi\] for all $f\in A^p(\Omega)$.  For Banach spaces $X$ and $Y$, we say a linear operator $T:X\rightarrow Y$ is compact if $T(\{x\in X:\|x\|<1\})$ is relatively compact in the norm topology on $Y$.  If $X$ is a Hilbert space then we can characterize compactness of linear operator $T:X\rightarrow X$ in terms of weakly convergent sequences.

\section{Some Background and Main Results}
Compactness of composition operators was studied on the unit disk in $\mathbb{D}$ in the article \cite{ghatagecompact}. Here, the authors of \cite{ghatagecompact} study the angular derivative of the symbol near the boundary and obtain a compactness result.  They then construct a counterexample to show that the converse of their theorem does not hold true.  The authors of 
\cite{ghatageclosed} studied the closed range property of composition operators on the unit disk.  Work on essential norm estimates and compactness of composition operators was studied on the ball in $\mathbb{C}^n$ and on the unit disk in $\mathbb{C}$ by \cite{CowenMacCluer} and \cite{HMW}.  On more general bounded strongly pseudoconvex domains in $\mathbb{C}^n$, \cite{cuckruhanseveral} studied the essential norm of the composition operator in terms of the behavior of the norm of the normalized Bergman kernel composed with the symbol.  Our approach is to use the idea of the 'generalized angular derivative' (the Jacobian) of the symbol and its behavior on the boundary.  As a preliminary result, we assume $\Omega\subset \mathbb{C}^n$ is a bounded pseudoconvex domain and $\phi$ a holomorphic self-map on $\Omega$.  If the range of $\phi$ is compactly contained in $\Omega$, then the composition operator with symbol $\phi$ is compact on the Bergman space of $\Omega$.  The main result result relates compactness of the composition operator $C_{\phi}$ to the behavior of $J(\phi)(p)$ for $p\in b\Omega$ for $C^2$-smooth bounded convex domain $\Omega$.  Then as a corollary we show that there is no surjective proper map $\phi:\Omega\rightarrow \Omega$ that is $C^1$-smooth up to $\overline{\Omega}$ so that $C_{\phi}$ is compact on $A^2(\Omega)$.  Then we show the converse of the main result is false.

\begin{theorem}\label{singular}
Let $\Omega\subset\mathbb{C}^n$ for $n\geq 2$ be a bounded convex domain with a $C^2$-smooth boundary. Let $\phi:=(\phi_1,\phi_2,...,\phi_n):\Omega\rightarrow \Omega$ be holomorphic in every coordinate function and $C^1$-smooth in each coordinate function on $\overline{\Omega}$.  Then if $C_{\phi}:A^2(\Omega)\rightarrow A^2(\Omega)$ is compact on $A^2(\Omega)$ then 
$\phi^{-1}(b\Omega)=\emptyset$ or $\phi^{-1}(b\Omega)$ consists of singular points. 
\end{theorem}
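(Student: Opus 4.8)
The plan is to prove the contrapositive: assuming $\phi^{-1}(b\Omega)$ contains a regular point $p$ (a point with $J(\phi)(p)\neq 0$), I will show that $C_{\phi}$ fails to be compact. Fix such a $p$, which necessarily lies in $b\Omega$ since $\phi(\Omega)\subseteq\Omega$, and write $q:=\phi(p)\in b\Omega$. The engine of the argument is the family of normalized reproducing kernels: writing $K(\cdot,\cdot)$ for the Bergman kernel of $\Omega$ and $k_w:=K(\cdot,w)/\sqrt{K(w,w)}$, one has $\|k_w\|_{2,\Omega}=1$ and $k_w\rightharpoonup 0$ weakly as $w\to b\Omega$ (testing against a dense class of functions bounded on $\overline{\Omega}$ and using $K(w,w)\to\infty$). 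I would then invoke the adjoint identity $C_{\phi}^{*}K_w=K_{\phi(w)}$, valid for the bounded operator $C_{\phi}$, which yields
\[
\|C_{\phi}^{*}k_w\|_{2,\Omega}^2=\frac{K(\phi(w),\phi(w))}{K(w,w)}.
\]
Since a compact operator carries weakly null sequences to norm‑null sequences, compactness of $C_{\phi}$ (hence of $C_{\phi}^{*}$) would force this ratio to tend to $0$ along any sequence $w_j\to p$ in $\Omega$. The whole proof thus reduces to showing that this ratio stays bounded away from $0$.

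To control the ratio I would exploit that $p$ is a regular point. Because $\phi\in C^1(\overline{\Omega})$ and the real Jacobian of $\phi$ at $p$ has determinant $|J(\phi)(p)|^2>0$, the inverse function theorem makes $\phi$ a local $C^1$‑diffeomorphism of a neighborhood $U$ of $p$; as $\phi$ maps $\Omega$ into $\Omega$ and $p$ to the boundary point $q$, it carries $\Omega\cap U$ biholomorphically onto $\Omega\cap V$ and $b\Omega\cap U$ onto $b\Omega\cap V$, where $V=\phi(U)$. The biholomorphic transformation rule for Bergman kernels then gives the exact identity
\[
K_{\Omega\cap U}(w,w)=|\det\phi'(w)|^2\,K_{\Omega\cap V}(\phi(w),\phi(w)),\qquad w\in\Omega\cap U.
\]
Combining this with the localization principle for the Bergman kernel on bounded pseudoconvex domains — namely $K(z,z)\le K_{\Omega\cap U}(z,z)$ by domain monotonicity, together with $K_{\Omega\cap V}(u,u)\lesssim K(u,u)$ as $u\to q$ — I obtain
\[
K(w,w)\le |\det\phi'(w)|^2 K_{\Omega\cap V}(\phi(w),\phi(w))\lesssim |\det\phi'(w)|^2\,K(\phi(w),\phi(w)).
\]
Since $|\det\phi'(w)|^2\to|J(\phi)(p)|^2$, this shows $K(\phi(w),\phi(w))/K(w,w)\gtrsim 1$ as $w\to p$, exactly the lower bound needed. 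The point of routing the comparison through the two local pieces is that the transformation rule supplies an \emph{exact} local identity, so one never needs the precise boundary growth rate of $K(z,z)$.

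Putting the pieces together, along $w_j\to p$ we have $k_{w_j}\rightharpoonup 0$ while $\|C_{\phi}^{*}k_{w_j}\|_{2,\Omega}^2\gtrsim 1/|J(\phi)(p)|^2>0$, contradicting complete continuity of $C_{\phi}^{*}$. Hence $C_{\phi}$ is not compact, which is the contrapositive of the theorem.

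The step I expect to be the main obstacle is the localization estimate $K_{\Omega\cap V}(u,u)\lesssim K(u,u)$ as $u\to q$: domain monotonicity only gives the trivial inequality in the wrong direction, and the reverse comparison is what makes the two local pieces genuinely interchangeable with the global kernel. The standard route is a H\"ormander $\bar\partial$‑argument on the pseudoconvex domain $\Omega$ — extend the local extremal function by a cutoff $\chi$ supported in $V$ and correct $\chi f$ by solving $\bar\partial v=(\bar\partial\chi)f$ with an $L^2$ estimate and a weight forcing $v$ to vanish at $u$ — which converts a near‑minimizer for $K_{\Omega\cap V}(u,u)$ into a competitor for $K(u,u)$ of comparable norm. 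Verifying that this argument goes through under only $C^2$‑smoothness and convexity (in particular that no finite‑type hypothesis is needed) is the delicate point; the remaining ingredients (weak nullity of $k_w$, the adjoint identity, and the manifold‑with‑boundary local diffeomorphism sending boundary to boundary and interior to interior) are routine.
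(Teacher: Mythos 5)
Your overall architecture (weak nullity of the normalized kernels, the adjoint identity $C_{\phi}^{*}K_w=K_{\phi(w)}$, and the fact that compactness forces $K(\phi(w),\phi(w))/K(w,w)\to 0$ as $w\to p$) is sound, and it is genuinely different from the paper, which instead tests $C_{\phi}$ against the explicit functions $\alpha_j z_1^{-\beta_j}$ concentrating at $q=\phi(p)$. The gap is the step where you claim the local diffeomorphism ``carries $\Omega\cap U$ biholomorphically onto $\Omega\cap V$ and $b\Omega\cap U$ onto $b\Omega\cap V$.'' Only the inclusion $\phi(\Omega\cap U)\subseteq \Omega\cap V$ is available: nothing forces $\phi(b\Omega\cap U)\subseteq b\Omega$ (already on the disk, $z\mapsto (1+z)/2$ sends every boundary point except $1$ into the interior), and a point $u\in\Omega\cap V$ may have its unique $\phi$-preimage in $U$ lying outside $\overline{\Omega}$. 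Consequently the transformation rule is an identity between $K_{\Omega\cap U}(w,w)$ and $|\det\phi'(w)|^2 K_{D}(\phi(w),\phi(w))$ with $D:=\phi(\Omega\cap U)$, and the estimate your argument actually needs is $K_{D}(u,u)\lesssim K_{\Omega}(u,u)$ as $u\to q$. That is \emph{not} the localization principle (which compares $K_\Omega$ with $K_{\Omega\cap V}$); it is a comparison between $\Omega$ and an internally tangent subdomain $D$, in the direction opposite to monotonicity, and it is false in general, because an internally tangent subdomain can have strictly faster kernel growth at the contact point.

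Concretely, take $\Omega=\{(z_1,z_2)\in\mathbb{C}^2:|z_1|^2+\chi(|z_2|)<1\}$ with $\chi$ convex, $C^2$, $\chi\equiv 0$ on $[0,1/2]$: a bounded convex $C^2$ domain whose boundary contains analytic disks through $q=(1,0)$, so that $K_\Omega(u,u)\approx \delta_\Omega(u)^{-2}$ near $q$ (upper bound from the inscribed bidisk, lower bound from the supporting hyperplane). Let $B(z_0,R)$ be a circumscribed ball touching $b\Omega$ at a point $p$, let $B_r\subset\Omega$ be a ball of radius $r<1/8$ internally tangent to $b\Omega$ at $q$, and let $\phi$ be the affine contraction taking $B(z_0,R)$ onto $B_r$ with $\phi(p)=q$. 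Then $\phi(\Omega)\subseteq B_r\subseteq\Omega$, $\phi^{-1}(b\Omega)=\{p\}$, and $J(\phi)$ is a nonzero constant; yet $K_\Omega(w,w)\geq K_{B(z_0,R)}(w,w)\gtrsim \delta_{B(z_0,R)}(w)^{-3}$ while $K_\Omega(\phi(w),\phi(w))\lesssim \delta_\Omega(\phi(w))^{-2}\approx \delta_{B(z_0,R)}(w)^{-2}$, so your ratio tends to $0$ along \emph{every} sequence $w\to p$ and no contradiction can be extracted. This is not repairable by a better localization lemma: a Fubini-plus-subharmonicity estimate on the slices $\{z_1\}\times\{|z_2|<1/2\}$ gives $\int_{B_r\cap B(q,\eta)}|f|^2\,dV\leq C\eta\,\|f\|_{2,\Omega}^2$ for all $f\in A^2(\Omega)$, so $\chi_{B_r}\,dV$ is a vanishing Carleson measure and $C_\phi$ is in fact \emph{compact} in this example, even though $\phi^{-1}(b\Omega)$ consists of a regular point. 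So the failure you flagged as the ``delicate point'' is a genuine obstruction: some nondegeneracy (e.g., finite type or strict pseudoconvexity at $\phi(p)$) is needed for any proof along these lines. You should also be aware that the paper's own argument stumbles at exactly the same place: its final chain of inequalities requires $B(0,\alpha)\cap\Omega\subseteq \phi(B(p,\varepsilon)\cap\Omega)$, but it only establishes $B(0,\alpha)\subseteq\phi(B(p,\varepsilon))$, and in the example above the former inclusion (and the claimed lower bound on $\|C_\phi g_j\|$) fails.
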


As a consequence of Theorem \ref{singular}, one can obtain the following.

\begin{corollary}\label{thm2}
Let $\Omega\subset\mathbb{C}^n$ for $n\geq 2$ be a bounded convex domain with a smooth boundary. Let $\phi:=(\phi_1,\phi_2,...,\phi_n):\Omega\rightarrow \Omega$ be holomorphic in every coordinate functions and $C^1$-smooth in each coordinate function on $\overline{\Omega}$.  If $\phi$ is a surjective proper map then $C_{\phi}$ is not compact on $A^2(\Omega)$.
\end{corollary}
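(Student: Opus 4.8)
The plan is to argue by contradiction, reducing the statement to Theorem~\ref{singular} and then feeding its conclusion into a maximum modulus argument for the holomorphic Jacobian determinant. Suppose, toward a contradiction, that $\phi$ is a surjective proper self-map of $\Omega$, that $\phi$ is $C^1$-smooth up to $\overline{\Omega}$ in each coordinate, and yet $C_{\phi}$ is compact on $A^2(\Omega)$. Since $\phi$ is $C^1$ on $\overline{\Omega}$ it extends continuously to the boundary, so the first step is to identify $\phi^{-1}(b\Omega)$ precisely. Because $\phi(\Omega)\subseteq\Omega$, no interior point of $\Omega$ can be sent to $b\Omega$, whence $\phi^{-1}(b\Omega)\subseteq b\Omega$.

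For the reverse inclusion I would use properness in its escape-to-the-boundary form: if $p\in b\Omega$ but $\phi(p)$ were an interior point, one could take a compact neighborhood $K\subseteq\Omega$ of $\phi(p)$ and a sequence $z_k\to p$ in $\Omega$; continuity gives $\phi(z_k)\in K$ for large $k$, so $z_k\in\phi^{-1}(K)$, which is compact in $\Omega$ by properness, contradicting $z_k\to p\in b\Omega$. Hence $\phi(b\Omega)\subseteq b\Omega$, and combining the two inclusions yields $\phi^{-1}(b\Omega)=b\Omega$. In particular $\phi^{-1}(b\Omega)\neq\emptyset$.

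Now I would invoke Theorem~\ref{singular}. Since $C_{\phi}$ is assumed compact and $\phi^{-1}(b\Omega)=b\Omega$ is nonempty, the first alternative of the theorem is excluded, so $b\Omega$ must consist of singular points of $\phi$; that is, the Jacobian determinant satisfies $J(\phi)\equiv 0$ on $b\Omega$. The function $J(\phi)=\det\left(\partial\phi_i/\partial z_j\right)$ is holomorphic on $\Omega$, and because each $\phi_i\in C^1(\overline{\Omega})$ its entries are continuous up to $\overline{\Omega}$, so $J(\phi)\in C(\overline{\Omega})\cap\mathcal{O}(\Omega)$. The maximum modulus principle then forces $\sup_{\Omega}|J(\phi)|=\max_{b\Omega}|J(\phi)|=0$, so $J(\phi)\equiv 0$ throughout $\Omega$.

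Finally, this contradicts surjectivity: a holomorphic map whose Jacobian determinant vanishes identically has rank strictly less than $n$ everywhere, so its image is a set of Lebesgue measure zero in $\mathbb{C}^n$ and cannot be all of $\Omega$. This contradiction shows $C_{\phi}$ cannot be compact. The genuinely substantive step is the passage from the pointwise boundary vanishing supplied by Theorem~\ref{singular} to the global vanishing of $J(\phi)$; the maximum modulus principle makes this clean precisely because the $C^1$ hypothesis guarantees $J(\phi)$ extends continuously to $\overline{\Omega}$, while the properness input is used only to pin down $\phi^{-1}(b\Omega)=b\Omega$, which is what makes the boundary large enough for the maximum principle to bite.
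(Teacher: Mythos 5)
Your proof is correct and follows essentially the same route as the paper's: reduce to Theorem~\ref{singular} via $\phi^{-1}(b\Omega)=b\Omega$, use the maximum modulus principle to propagate the boundary vanishing of $J(\phi)$ into $\Omega$, and contradict surjectivity since an everywhere-critical holomorphic map has measure-zero image (the paper cites Sard's theorem for this last step, which is the same fact). The only difference is that you spell out the properness argument for $\phi^{-1}(b\Omega)=b\Omega$, which the paper asserts without proof.
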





\section{Preliminaries}

We let $J(\phi)(p)$ be the determinant of the complex Jacobian matrix of $\phi$ at point $p$.  Then $|J(\phi)(p)|^2$ is the determinant of the real Jacobian matrix of $\phi$ at point $p$.  We note that \[\phi(z_1,z_2,...,z_n)=(\phi_1(z_1,z_2,...,z_n), \phi_2(z_1,z_2,...,z_n),...,\phi_n(z_1,z_2,...,z_n))\] where \[\phi_j\in C^{1}(\overline{\Omega})\] and are holomorphic on $\Omega$ for every $j=1,2,...,n$.  By the smoothness of $b\Omega$, we can extend $\phi_j$ as a smooth function on $\mathbb{C}^n$ for $j\in \{1,2,...,m\}$, also called $\phi_j$.
If $|J(\phi)(p)|\neq 0$ for all $p\in b\Omega$, we use the compactness of $b\Omega$ and the inverse function theorem applied to $\phi$, to cover $b\Omega$ with finitely many balls $\{B(p_s,r_s)\}_{s=1,...,k}$ so that $p_s\in b\Omega$ for all $s=1,...,k$, $r_s>0$ for all $s\in \{1,2,...,k\}$, and the restriction $\phi|_{B(p_s,r_s)}$ is invertible with inverse $\psi_s$ for $s\in \{1,2,...,k\}$.  Then there exists $\delta_0>0$ so that 
\[U_{\delta_0}:=\{(z_1,z_2,...,z_n)\in \Omega:{dist}((z_1,z_2,...,z_n),b\Omega)<\delta_0\}\subset\subset \bigcup_{s\in \{1,2,...,k\}}B(p_s,r_s)\cap\Omega\]

\begin{lemma}\label{lem1}

For $U_{\delta_0}$ defined previously, the measure $d\mu:=\chi_{U_{\delta_0}}dV$ is reverse Carleson with respect to
the Lebesgue volume measure $dV$ on a bounded pseudoconvex domain $\Omega\subset \mathbb{C}^n$ for $n\geq 2$.  That is, for every $p\in [1,\infty)$ and $g\in A^p(\Omega)$, there exists $M_{p,\delta_0}>0$ so that \[\|g\|_{p,\Omega}\leq M_{p,\delta_0}\|g\|_{p,U_{\delta_0}}.\]

\end{lemma}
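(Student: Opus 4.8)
The plan is to establish the estimate by a normal families (compactness) argument, reducing everything to the fact that the ``core'' $K:=\Omega\setminus U_{\delta_0}=\{z\in\Omega:\mathrm{dist}(z,b\Omega)\geq\delta_0\}$ is a compact subset of $\Omega$, while $U_{\delta_0}$ is a nonempty open set that fills out a neighborhood of $b\Omega$. Since $\|g\|_{p,\Omega}^p=\|g\|_{p,U_{\delta_0}}^p+\int_K|g|^p\,dV$, the asserted inequality is equivalent to controlling the interior mass $\int_K|g|^p\,dV$ by $\|g\|_{p,U_{\delta_0}}^p$; intuitively, a Bergman function cannot concentrate its $L^p$ mass deep inside $\Omega$ while remaining small in the boundary collar.

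First I would argue by contradiction: if no $M_{p,\delta_0}$ works, then for each $k$ there is $g_k\in A^p(\Omega)$ which, after normalizing, satisfies $\|g_k\|_{p,\Omega}=1$ and $\|g_k\|_{p,U_{\delta_0}}<1/k$. The key analytic input is that $|g_k|^p$ is plurisubharmonic (as $g_k$ is holomorphic and $t\mapsto t^p$ is convex and increasing on $[0,\infty)$), so the sub-mean value inequality over balls $B(z,r)\subset\subset\Omega$ gives $|g_k(z)|^p\leq V(B(z,r))^{-1}\|g_k\|_{p,\Omega}^p$. Hence $\{g_k\}$ is uniformly bounded on compact subsets of $\Omega$, and by Montel's theorem a subsequence converges locally uniformly to a holomorphic function $g$ on $\Omega$.

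Next I would pass to the limit on the two pieces separately. On the compact core $K$, local uniform convergence yields $\int_K|g|^p\,dV=\lim_k\int_K|g_k|^p\,dV=\lim_k\bigl(1-\|g_k\|_{p,U_{\delta_0}}^p\bigr)=1$. On the collar, exhausting $U_{\delta_0}$ by compact sets $L$ and using $\int_L|g_k|^p\,dV\leq\|g_k\|_{p,U_{\delta_0}}^p\to0$ together with local uniform convergence gives $\int_L|g|^p\,dV=0$ for every such $L$, whence $\int_{U_{\delta_0}}|g|^p\,dV=0$ and $g\equiv0$ on $U_{\delta_0}$. Since $\Omega$ is a domain and $U_{\delta_0}$ is a nonempty open set, the identity theorem forces $g\equiv0$ on all of $\Omega$, contradicting $\int_K|g|^p\,dV=1$. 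This contradiction produces the desired constant $M_{p,\delta_0}$.

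The main obstacle is the limiting step rather than the normal-family construction: one must be careful that convergence of the $L^p$-integrals is invoked only over sets compactly contained in $\Omega$, where local uniform convergence legitimately passes through the integral, and that the collar $U_{\delta_0}$, which is not relatively compact, is handled through a compact exhaustion using only the easy inequality $\int_L|g_k|^p\,dV\leq\|g_k\|_{p,U_{\delta_0}}^p$. Pseudoconvexity plays no essential role here; the argument uses only that $\Omega$ is a bounded connected open set, the subharmonicity of $|g|^p$, and the identity theorem.
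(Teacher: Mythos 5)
Your proof is correct, and it takes a genuinely different route from the paper's. The paper argues by soft functional analysis: the restriction operator $R_{\delta_0}\colon A^p(\Omega)\to A^p(U_{\delta_0})$ is injective by the identity principle, surjective by the Hartogs extension theorem (a holomorphic function on the collar $U_{\delta_0}=\Omega\setminus K$ extends across the compact core $K$, and the extension is in $L^p$ since it is continuous on $K$), and clearly bounded; the open mapping theorem then gives a bounded inverse, which is exactly the reverse Carleson inequality. That argument hinges on the Hartogs phenomenon, hence on $n\geq 2$ (and, as usually stated, on connectivity of the collar) --- which is precisely why the paper's Remark says the one-variable case needs a separate argument. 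Your contradiction/normal-families proof avoids Hartogs entirely: it uses only the sub-mean value inequality for the plurisubharmonic function $|g|^p$ (valid here since $p\geq 1$, $|g|$ is psh, and $t\mapsto t^p$ is convex increasing), Montel's theorem, uniform convergence on the compact core $K$ and on a compact exhaustion of the collar, and the identity theorem on the connected set $\Omega$; the key observation that $\int_K|g_k|^p\,dV=1-\|g_k\|_{p,U_{\delta_0}}^p\to 1$ survives the limit because $K$ is compactly contained in $\Omega$. What each approach buys: yours works verbatim for $n=1$ and for arbitrary bounded domains with no pseudoconvexity, so it subsumes the paper's Remark in a single argument and sidesteps the surjectivity details (integrability of the Hartogs extension, connectivity of $U_{\delta_0}$); the paper's proof, in exchange, establishes the stronger structural fact that restriction is a Banach-space isomorphism $A^p(\Omega)\cong A^p(U_{\delta_0})$, which is of independent interest. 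Both proofs are equally non-constructive in the constant $M_{p,\delta_0}$.
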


\begin{proof}

We consider the restriction operator $R_{\delta_0}:A^p(\Omega)\rightarrow A^p(U_{\delta_0})$.
By the identity principle for holomorphic functions, $R_{\delta_0}$ is injective.  And by Hartog's extension theorem (see \cite[1.2.6]{krantz}), $R_{\delta_0}$ is surjective.  Therefore, $R_{\delta_0}$ is invertible.  It is clear that $R_{\delta_0}$ is bounded.  Therefore, by the Open Mapping theorem,  $R_{\delta_0}$ has a bounded inverse.  Then there exists $M>0$ so that 

\[\|f\|_{p,\Omega}=\|(R_{\delta_0})^{-1}R_{\delta_0}f\|_{p,\Omega}\leq M\|R_{\delta_0}f\|_{p,U_{\delta_0}}=M\|f\|_{p,U_{\delta_0}}.\]

This shows that $d\mu$ is reverse Carleson.

\end{proof}

\begin{remark}
Lemma \ref{lem1} is true for bounded domains in $\mathbb{C}$.  Instead of Hartog's extension theorem one must use the mean value principle for holomorphic functions.

\end{remark}

\begin{lemma}\label{lem2}
Let $\Omega_1,\Omega_2\subset \mathbb{C}^n$ for $n\geq 2$ be bounded pseudoconvex domains.  Furthermore, assume there exists a biholomorphism $B:\Omega_1\rightarrow \Omega_2$ so that $B\in C^1(\overline{\Omega_1})$.  Suppose $\phi:=(\phi_1,\phi_2,...,\phi_n):\Omega_2\rightarrow \Omega_2$ is such that the composition operator $C_{\phi}$ is compact on $A^2(\Omega_2)$.  Then, $C_{B^{-1}\circ \phi\circ B}$ is compact on $A^2(\Omega_1)$.  
\end{lemma}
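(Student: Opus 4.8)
The plan is to realize $C_{B^{-1}\circ\phi\circ B}$ as a bounded multiplication operator composed with a unitary conjugate of the compact operator $C_\phi$. Write $\psi:=B^{-1}\circ\phi\circ B$, and introduce the weighted composition (change-of-variables) operator $U_B\colon A^2(\Omega_2)\to A^2(\Omega_1)$ defined by $U_B f=(f\circ B)\,J(B)$. The first step is to observe that $U_B$ is a surjective isometry: by the holomorphic change of variables formula, and because $|J(B)|^2$ is the real Jacobian determinant of $B$, one has $\|U_B f\|_{2,\Omega_1}^2=\int_{\Omega_1}|f\circ B|^2|J(B)|^2\,dV=\int_{\Omega_2}|f|^2\,dV=\|f\|_{2,\Omega_2}^2$ for every $f\in A^2(\Omega_2)$. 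Surjectivity follows because $B$ is a biholomorphism, so $U_B$ is unitary with inverse $U_B^{-1}g=(g\circ B^{-1})\,J(B^{-1})$.

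Second, I would compute the conjugation $W:=U_B C_\phi U_B^{-1}$ pointwise. For $g\in A^2(\Omega_1)$, applying $U_B^{-1}$, then $C_\phi$, then $U_B$ and collecting the Jacobian factors gives $W g=(g\circ\psi)\cdot w$, where $w:=J(B)\cdot\bigl(J(B^{-1})\circ\phi\circ B\bigr)$. Using $J(B^{-1})(u)=1/J(B)(B^{-1}(u))$ this weight simplifies to $w(z)=J(B)(z)/J(B)(\psi(z))$, so that $W=M_w C_\psi$, where $M_w$ denotes multiplication by $w$. Since $U_B$ is unitary and $C_\phi$ is compact on $A^2(\Omega_2)$ by hypothesis, the conjugate $W$ is compact on $A^2(\Omega_1)$.

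Third, I would show that $M_w$ is boundedly invertible on $A^2(\Omega_1)$, with $M_w^{-1}=M_{1/w}$. The Jacobian $J(B)$ is holomorphic and nonvanishing on $\Omega_1$; the hypothesis $B\in C^1(\overline{\Omega_1})$ together with the fact that $B$ is a biholomorphism yields that $J(B)$ extends continuously and without zeros to the compact set $\overline{\Omega_1}$, so there are constants $0<c\le C$ with $c\le|J(B)|\le C$ on $\overline{\Omega_1}$. Because $\psi(\Omega_1)\subset\Omega_1\subset\overline{\Omega_1}$, both $w$ and $1/w$ are bounded by $C/c$, hence $w,1/w\in H^\infty(\Omega_1)$ and $M_w,M_{1/w}$ are bounded multiplication operators on $A^2(\Omega_1)$. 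Consequently $C_\psi=M_{1/w}W=M_{1/w}U_BC_\phi U_B^{-1}$ is a bounded operator times a compact operator, and is therefore compact.

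The main obstacle is the third step, namely establishing that the weight $w$ and its reciprocal are bounded on $\Omega_1$: this amounts to showing that $J(B)$ stays bounded away from $0$ up to the boundary, which is exactly where the $C^1$-smoothness of $B$ on $\overline{\Omega_1}$ (and the nonvanishing of the Jacobian of a biholomorphism) is essential, since it is what guarantees that $J(B)$ is continuous and zero-free on the compact closure $\overline{\Omega_1}$. The remaining verifications, the isometry computation and the pointwise identity for $W$, are routine changes of variables and bookkeeping of Jacobian factors.
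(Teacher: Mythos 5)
Your route is genuinely different from the paper's. The paper argues sequentially: it takes a weakly null sequence $g_j$ in $A^2(\Omega_1)$, transfers it to $h_j:=g_j\circ B^{-1}$, checks that $h_j$ is weakly null in $A^2(\Omega_2)$ via the characterization of weak nullity as norm-boundedness plus uniform convergence on compact sets, and then bounds $\|C_{B^{-1}\circ\phi\circ B}(g_j)\|_{2,\Omega_1}^2$ by $\sup_{\Omega_2}|J(B^{-1})|^2\,\|C_\phi(h_j)\|_{2,\Omega_2}^2$ using a change of variables. You instead conjugate by the canonical unitary $U_Bf=(f\circ B)J(B)$ and factor $C_\psi=M_{1/w}\,U_BC_\phi U_B^{-1}$ with $w=J(B)/(J(B)\circ\psi)$; this avoids the sequential criterion entirely and exhibits $C_\psi$ as a bounded operator composed with a compact one. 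Your first two steps (unitarity of $U_B$ and the pointwise identity $U_BC_\phi U_B^{-1}=M_wC_\psi$) are correct.

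However, your third step contains a genuine gap: you assert that $B\in C^1(\overline{\Omega_1})$ together with $B$ being a biholomorphism forces $J(B)$ to be zero-free on the compact set $\overline{\Omega_1}$. That is false. Nonvanishing of $J(B)$ is automatic only on the open domain $\Omega_1$; nothing in the hypotheses prevents $J(B)$ from vanishing at boundary points. Concretely, $B_0(z)=(z+1)^2$ is injective and holomorphic on the unit disk $\mathbb{D}$, smooth on $\overline{\mathbb{D}}$, and $B_0'(-1)=0$; then $B(z_1,z_2):=(B_0(z_1),z_2)$ is a biholomorphism of the bounded pseudoconvex domain $\mathbb{D}^2$ onto $B_0(\mathbb{D})\times\mathbb{D}$, smooth up to the closure, whose Jacobian $J(B)(z_1,z_2)=2(z_1+1)$ vanishes on $\{-1\}\times\overline{\mathbb{D}}\subset b(\mathbb{D}^2)$. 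For such a $B$ your weight $1/w=(J(B)\circ\psi)/J(B)$ need not be bounded, and the factorization argument breaks down. In fairness, the paper's own proof tacitly makes the identical assumption: its displayed estimate uses $\sup\{|J(B^{-1})(z)|^2:z\in\Omega_2\}$, and the finiteness of that supremum is exactly the condition $\inf_{\Omega_1}|J(B)|>0$ that you need. So your argument isolates precisely the hidden hypothesis on which the paper's proof also rests; but as written, your justification of that hypothesis is incorrect, since it does not follow from the lemma's stated assumptions.
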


\begin{proof}
Let $g_j\in A^2(\Omega_1)$ so that $g_j\rightarrow 0$ weakly as $j\rightarrow \infty$.  We will use the fact that $g_j\rightarrow 0$ weakly in $A^2(\Omega_1)$ as $j\rightarrow \infty$ if and only if $\|g_j\|$ is a bounded sequence in $j$ and $g_j\rightarrow 0$ uniformly on compact subsets of $\Omega_1$.  This fact appears as \cite[lemma 3.5]{cuckruhanseveral}. Therefore, $\|g_j\|$ is uniformly bounded in $j$ and $g_j\rightarrow 0$ uniformly on compact subsets of $\Omega_1$.
Then define $h_j:=g_j\circ B^{-1}\in  A^2(\Omega_2)$.  Then using a change of coordinates, one can show $\|h_j\|$ is uniformly bounded in $j$ and $h_j\rightarrow 0$ uniformly on compact subsets of $\Omega_2$.  Therefore, by \cite[lemma 3.5]{cuckruhanseveral}, $h_j\rightarrow 0$ weakly as $j\rightarrow \infty$.  
Then we have,
\begin{align*}
&\|C_{B^{-1}\circ \phi\circ B}(g_j)\|_{2,\Omega_1}^2\\
&=\|h_j\circ \phi\circ B\|_{2,\Omega_1}^2\\
&\leq \sup\{|J(B^{-1})(z)|^2:z\in \Omega_2\}\|C_{\phi}(h_j)\|_{2,\Omega_2}^2
\end{align*}

This shows that $C_{B^{-1}\circ\phi\circ B}$ is compact on $A^2(\Omega)$.

\end{proof}

As an application of Lemma \ref{lem1} and Lemma \ref{lem2}, we have the following proposition.

\begin{proposition}\label{thm1}
  Let $\Omega\subset\mathbb{C}^n$ for $n\geq 2$ be a bounded pseudoconvex domain with a smooth boundary. Let $\phi:=(\phi_1,\phi_2,...,\phi_n):\Omega\rightarrow \Omega$ be holomorphic on $\Omega$ in every component function and $C^1$-smooth in each component function on $\overline{\Omega}$.  If the Jacobian of $\phi$ is non-vanishing at every point in $b\Omega$, then the composition operator $C_{\phi}:A^p(\Omega)\rightarrow A^p(\Omega)$ is bounded for all $p\in [1,\infty)$.

\end{proposition}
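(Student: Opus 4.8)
The plan is to reduce the global $A^p(\Omega)$ estimate to one on the boundary collar $U_{\delta_0}$ by means of the reverse Carleson inequality of Lemma \ref{lem1}, and then to exploit the local invertibility of $\phi$ near $b\Omega$ that is provided by the non-vanishing Jacobian. Fix $f\in A^p(\Omega)$. The function $f\circ\phi$ is holomorphic on $\Omega$, being a composition of holomorphic maps, so it is a candidate element of $A^p(\Omega)$ and the only task is to bound its norm. Since the restriction operator $R_{\delta_0}$ is a bounded bijection with bounded inverse, it suffices to control $\|f\circ\phi\|_{p,U_{\delta_0}}$: once this quantity is finite, $f\circ\phi=R_{\delta_0}^{-1}R_{\delta_0}(f\circ\phi)$ lies in $A^p(\Omega)$ and $\|f\circ\phi\|_{p,\Omega}\le M_{p,\delta_0}\|f\circ\phi\|_{p,U_{\delta_0}}$ by Lemma \ref{lem1}, delivering both membership in $A^p(\Omega)$ and the desired bound at once.

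To bound the collar integral I would use the finite cover $U_{\delta_0}\subset\subset\bigcup_{s=1}^k B(p_s,r_s)\cap\Omega$ constructed in the Preliminaries, on each ball of which $\phi$ restricts to a biholomorphism with $C^1$ inverse $\psi_s$. Because overcounting only inflates a nonnegative integrand,
\[\int_{U_{\delta_0}}|f(\phi(z))|^p\,dV(z)\le\sum_{s=1}^k\int_{B(p_s,r_s)\cap\Omega}|f(\phi(z))|^p\,dV(z),\]
and on each piece I would change variables $w=\phi(z)$. The holomorphic change-of-variables formula converts $dV(z)$ into $|J(\psi_s)(w)|^2\,dV(w)$ over the image $\phi(B(p_s,r_s)\cap\Omega)\subseteq\Omega$, and the non-vanishing of $J(\phi)$ together with the $C^1$-smoothness of $\psi_s$ up to the closure makes $|J(\psi_s)|^2$ bounded by a constant $C_s$ there. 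Enlarging the region of integration from $\phi(B(p_s,r_s)\cap\Omega)$ to all of $\Omega$ then gives
\[\int_{B(p_s,r_s)\cap\Omega}|f(\phi(z))|^p\,dV(z)\le C_s\int_{\Omega}|f(w)|^p\,dV(w)=C_s\|f\|_{p,\Omega}^p.\]
Summing over $s$ and inserting the reverse Carleson step yields $\|f\circ\phi\|_{p,\Omega}^p\le M_{p,\delta_0}^p\big(\sum_{s=1}^k C_s\big)\|f\|_{p,\Omega}^p$, which is precisely boundedness of $C_\phi$ on $A^p(\Omega)$.

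The main obstacle I anticipate is making the substitution globally legitimate even though $\phi$ is only locally, not globally, injective: $\phi$ need be neither proper nor injective on $\Omega$, so one cannot change variables $w=\phi(z)$ over the whole domain at once. The non-vanishing Jacobian hypothesis is exactly what licenses the local inverses $\psi_s$ and the finite boundary cover, so the essential work is to confirm that these local pieces cover $U_{\delta_0}$ with uniformly bounded inverse Jacobians and that the covering overlaps cost only a harmless multiplicative constant. A secondary point is the logical order flagged above: one must produce the finite collar bound \emph{before} invoking Lemma \ref{lem1}, and use that $f\circ\phi$ is the Hartogs extension of its own restriction to $U_{\delta_0}$, exactly as in the proof of Lemma \ref{lem1}. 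The core estimate thus rests on Lemma \ref{lem1} and the local change of variables; Lemma \ref{lem2} uses the identical change-of-coordinates mechanism, which is presumably why the two are grouped together, since it shows such norm estimates persist under $C^1$ biholomorphic changes of the domain.
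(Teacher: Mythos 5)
Your proposal is correct and follows essentially the same route as the paper's proof: bound the integral over the collar $U_{\delta_0}$ by summing over the finite cover $\{B(p_s,r_s)\}$, change variables via the local inverses $\psi_s$ picking up the bounded factor $|J(\psi_s)|^2$, and then invoke Lemma \ref{lem1} (with the Hartogs extension/identity principle) to pass from the collar estimate to the full $A^p(\Omega)$ bound. Your explicit remarks on the logical ordering and on why only local injectivity is needed are refinements of, not departures from, the paper's argument.
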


\begin{proof}
To show if $f\in A^p(\phi(\Omega))$ then $f\circ \phi\in A^p(\Omega)$, it suffices to show $f\circ \phi\in A^p(U_{\delta_0})$ and apply Hartog's extension theorem and identity principle.  
We have
\begin{align*} 
&\int_{U_{\delta_0}}|f\circ\phi|^p dV\\
&\leq \sum_{s=1}^k\int_{B(p_s,r_s)\cap\Omega}|f\circ\phi|^p dV\\
&=\sum_{s=1}^k\int_{\phi(B(p_s,r_s)\cap\Omega)}|f|^p |J(\psi_s)|^2dV\\
&\leq k\sup_{s=1,2,...,k}\sup_{\phi(B(p_s,r_s)\cap\Omega)}|J(\psi_s)|^2\|f\|^p_{p,\phi(\Omega)}<\infty\\
\end{align*}
Then the boundedness of $C_{\phi}$ follows from an application of Lemma \ref{lem1} to the above string of inequalities.  

\end{proof}

Next we will focus our attention on compactness of the composition operator with holomorphic symbol $\phi:\Omega\rightarrow \Omega$ and show that if the range of $\phi$ is compactly contained in $\Omega$, then $C_{\phi}$ is compact on $A^2(\Omega)$.  

\begin{proposition}
Let $\Omega\subset \mathbb{C}^n$ be a bounded pseudoconvex domain.  Suppose $\phi$ is a holomorphic self-map on $\Omega$ so that $\overline{\phi(\Omega) }\subset \Omega$.  Then $C_{\phi}$ is compact on $A^2(\Omega)$.  

\end{proposition}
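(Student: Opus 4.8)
The plan is to use the Hilbert space characterization of compactness via weakly convergent sequences, exactly as in the proof of Lemma \ref{lem2}. The whole argument reduces to the observation that $\overline{\phi(\Omega)}$ is a compact subset of $\Omega$, so that a sequence of holomorphic functions tending to zero uniformly on compacta of $\Omega$ will, after composition with $\phi$, tend to zero in the $A^2$ norm.

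First I would record boundedness of $C_{\phi}$. Set $K:=\overline{\phi(\Omega)}$, which is a compact subset of $\Omega$ by hypothesis. The interior (sub-mean-value) estimate for holomorphic functions furnishes a constant $C_K>0$ with $\sup_{K}|f|\leq C_K\|f\|_{2,\Omega}$ for every $f\in A^2(\Omega)$. Since $\Omega$ is bounded, $V(\Omega)<\infty$, and hence
\[
\|C_{\phi}f\|_{2,\Omega}^2=\int_{\Omega}|f\circ\phi|^2\,dV\leq V(\Omega)\,\sup_{K}|f|^2\leq V(\Omega)\,C_K^2\,\|f\|_{2,\Omega}^2 ,
\]
so $C_{\phi}$ maps $A^2(\Omega)$ boundedly into itself.

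Next I would invoke the fact that a bounded operator on a Hilbert space is compact if and only if it sends weakly null sequences to norm null sequences. Let $g_j\in A^2(\Omega)$ with $g_j\to 0$ weakly as $j\to\infty$. By \cite[lemma 3.5]{cuckruhanseveral}, $\|g_j\|$ is bounded in $j$ and $g_j\to 0$ uniformly on compact subsets of $\Omega$. Because $K=\overline{\phi(\Omega)}$ is compact in $\Omega$, this gives $\sup_{K}|g_j|\to 0$ as $j\to\infty$, and therefore
\[
\|C_{\phi}g_j\|_{2,\Omega}^2=\int_{\Omega}|g_j\circ\phi|^2\,dV\leq V(\Omega)\,\sup_{K}|g_j|^2\longrightarrow 0 .
\]
Thus $C_{\phi}g_j\to 0$ in norm, which shows $C_{\phi}$ is compact.

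The argument is short, and the only points demanding care are the two hypotheses that make it work: that $\overline{\phi(\Omega)}\subset\Omega$ is genuinely compact, so that uniform-on-compacta convergence applies on the \emph{entire} range of the symbol, and that $V(\Omega)<\infty$, so that uniform smallness upgrades to $L^2$ smallness. I expect the main (though modest) obstacle to be giving a clean justification that uniform-on-compacta convergence of $g_j$ transfers to $L^2$ convergence of $g_j\circ\phi$; this is handled by the single displayed inequality above together with the compactness of the closure of the range.
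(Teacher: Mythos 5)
Your proof is correct and follows essentially the same route as the paper: both arguments use the weak-sequence characterization of compactness together with \cite[lemma 3.5]{cuckruhanseveral}, and both reduce the matter to uniform convergence of $g_j$ on the compact set $\overline{\phi(\Omega)}$. The only cosmetic difference is that the paper phrases the final estimate via the pullback measure $\nu=V\circ\phi^{-1}$ supported in a compact subset of $\Omega$, whereas you bound $\|C_{\phi}g_j\|_{2,\Omega}^2$ directly by $V(\Omega)\sup_{\overline{\phi(\Omega)}}|g_j|^2$, which is an equally valid (and arguably more transparent) way to write the same step.
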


\begin{proof}
To prove compactness of $C_{\phi}$, it suffices to show that the image of a weakly convergent sequence in $A^2(\Omega)$ is strongly convergent.  Let $\{g_j\}_{j\in \mathbb{N}}\subset A^2(\Omega)$ converge to $0$ weakly as $j\rightarrow \infty$.  Then by 
\cite[lemma 3.5]{cuckruhanseveral}, $\|g_j\|_{L^2(\Omega)}$ is bounded and $g_j\rightarrow 0$ uniformly on compact subsets of $\Omega$.  We let $\nu:=V\circ \phi^{-1}$ be the pullback measure.  Since $\overline{\phi(\Omega)}$ is compactly contained in $\Omega$, there exists a compact set $\widetilde{\Omega}\subset \Omega$ so that the support of $\nu$ is contained in $\widetilde{\Omega}$.  Thus
\[\int_{\Omega}|g_j\circ \phi|^2dV=\int_{\Omega}|g_j|^2d\nu=\int_{\widetilde{\Omega}}|g_j|^2d\nu.\]  Since $\widetilde{\Omega}$ is compact and $g_j\rightarrow 0$ uniformly on $\widetilde{\Omega}$ as $j\rightarrow \infty$, we have that $C_{\phi}g_j\rightarrow 0$ as $j\rightarrow \infty$ in norm.

\end{proof}

\section{Proofs of Main Results}

\begin{proof}[Proof of Theorem \ref{singular}]
Assume $C_{\phi}$ is compact.
Suppose $\phi^{-1}(b\Omega)\neq \emptyset$ and so let $p:=(p_1,p_2,...,p_n)\in \phi^{-1}(b\Omega)$.  Without loss of generality and appealing to Lemma \ref{lem2}, we may assume $\phi(p)=(0,0,...,0):=0$.  Furthermore, since $\Omega$ is convex, we may assume $\Omega\subset \{(z_1,z_2,...,z_n)\in \mathbb{C}^n: Re(z_1)>0\}$.
Now assume $|J(\phi)(p)|\neq 0$.  Since $\Omega$ has a $C^2$-smooth boundary, we may extend $\phi$ as a $C^1$ function on an open neighborhood of $\overline{\Omega}$.  That is, extend each component function $\phi_j$ as a $C^1$ function on a neighborhood $U_j$ of $\overline{\Omega}$.  Then $\phi$ has a $C^1$-smooth extension to $U:=\bigcap_{j\in \{1,2,...,n\}}U_j$.  See \cite[section 5.17]{AdamsFournier} and \cite[section VI]{Stein} for more details on the smooth extension of functions.  Then by the inverse function theorem, there exists $\varepsilon>0$ so that $\phi$ is a $C^1$-diffeomorphism on \[B(p,\varepsilon):=\{(z_1,z_2,...,z_n)\in\mathbb{C}^n: |z_1-p_1|^2+...+|z_n-p_n|^2<\varepsilon^2\}.\] and $\phi\in C^1(\overline{B(p,\varepsilon)})$.
Furthermore, we may assume $|J(\phi)|\neq 0$ on $\overline{B(p,\varepsilon)}$.  We define 
\[g_j(z_1,z_2)=\frac{\alpha_j}{z_1^{\beta_j}}\]
where $\beta_j=1-\frac{1}{j}$ and $\alpha_j$ chosen so that $\alpha_j\rightarrow 0$ as $j\rightarrow \infty$ and 
$\|g_j\|=1$ for all $j\in \mathbb{N}$.  The convexity of $\Omega$ allows us to construct this $g_j$ so that $g_j\in A^2(\Omega)$ for all $j\in \mathbb{N}$ by taking appropriate branch cuts.  That is, we may assume $\Omega\subset \{(z_1,z_2,...,z_n)\in \mathbb{C}^n:Re(z_1)>0\}$ by convexity of $\Omega$ and take the principle branch for $z_1^{-\beta_j}$.  We construct $\alpha_j$ as follows.  Let $R>0$ be chosen sufficiently large so that $\Omega\subset \{z_1\in \mathbb{C}: |z_1|<R\}\times...\times \{z_n\in \mathbb{C}:|z_n|<R\}$.
Then converting to polar coordinates we have.
\begin{align*}
&\int_{\Omega}|z_1^{-\beta_j}|^2 dV\\
&\leq (\pi R^2)^{n-1}\int_{0\leq \theta\leq 2\pi}\int_{0\leq r\leq R}r^{-2\beta_j+1}drd\theta<\infty\\
\end{align*}  
Thus $z_1^{-\beta_j}\in A^2(\Omega)$ for all $j\in \mathbb{N}$.  By convexity of $\Omega$, there exists
$S>0$, $(\lambda_1,...,\lambda_n)\in [0,2\pi)^n$ and $(\gamma_1,...,\gamma_n)\in [0,2\pi)^n$ so that 
$\gamma_j>\lambda_j$ for $j\in \{1,2,...,n\}$ and
\[\{z_1=r_1e^{i\theta_1}:0\leq r_1\leq S,\, \lambda_1\leq \theta_1\leq \gamma_1\}\times...\times \{z_n=r_ne^{i\theta_n}:0\leq r_n\leq S,\, \lambda_n\leq \theta_n\leq \gamma_n\}\subset \Omega.\]
Now we convert to polar coordinates and it is clear that \[\int_{\lambda_1\leq \theta\leq \gamma_1}\int_{0\leq r\leq S}r^{-2\beta_j+1}dr d\theta\rightarrow \infty\] as $j\rightarrow \infty$.  Thus we define 
\[\alpha_j:=\|z_1^{-\beta_j}\|_{\Omega}^{-1}.\]

Then one can show $g_j\rightarrow 0$ uniformly on compact subsets of $\Omega$ as $j\rightarrow \infty$.  Furthermore, $\|g_j\|_{\Omega}=1$ for all $j\in \mathbb{N}$.  Hence by \cite[lemma 3.5]{cuckruhanseveral}, $g_j\rightarrow 0$ weakly as $j\rightarrow \infty$.  Since $\phi(B(p,\varepsilon))$ is open, there exists $\alpha>0$ so that $B(0,\alpha)\subset \phi(B(p,\varepsilon))$.  Furthermore, one can show that there exists $\delta>0$ so that $\|g_j\|_{L^2(B(0,\alpha)\cap \Omega)}\geq \delta$ for all $j\in \mathbb{N}$.  By shrinking $\varepsilon>0$ if necessary, we may assume there exists an $M>0$ so that \[\inf\{|J({\phi}^{-1})(z_1,z_2)|^2:(z_1,z_2)\in \overline{\phi(B(0,\alpha)\cap \Omega)}\}\geq M.\]
Then we have
\begin{align*}
&\int_{B(p,\varepsilon)\cap \Omega}|g_j\circ \phi|^2 dV\\
&=\int_{\phi(B(p,\varepsilon)\cap \Omega)}|g_j|^2 |J({\phi}^{-1})|^2 dV\\
&\geq \inf\{|J({\phi}^{-1})(z_1,z_2)|^2: (z_1,z_2)\in \overline{\phi(B(p,\varepsilon)\cap \Omega)} \}\|g_j\|_{L^2(\phi(B(p,\varepsilon)\cap \Omega))}^2\\
&\geq M \|g_j\|^2_{L^2(B(0,\alpha)\cap \Omega)}\\
&\geq M\delta^2>0\\
\end{align*}

Thus $\|C_{\phi}g_j\|_{L^2(\Omega)}$ does not converge to $0$, a contradiction.  This implies $|J(\phi)(p)|=0$.  That is,
$\phi^{-1}(b\Omega)=\emptyset$ or consists of singular points.

\end{proof}

\begin{proof}[Proof of Corollary \ref{thm2}]
Assume $\phi:\Omega\rightarrow \Omega$ is a proper holomorphic map where $\phi:=(\phi_1,...,\phi_n)$ and $\phi_j\in C^{1}(\overline{\Omega})$ for $j\in \{1,2...,n\}$.  Furthermore, assume that $C_{\phi}$ is compact on $A^2(\Omega)$.  Since $\phi$ is proper, it is an open map (see \cite[pp.789]{JP}), and therefore is surjective (see \cite{Ho}).  Also, $\phi^{-1}(b\Omega)=b\Omega$,  so by Theorem \ref{singular}, the determinant of the complex Jacobian of $\phi$ is identically $0$ on $b\Omega$.  Since the determinant of the complex Jacobian is a holomorphic function on $\Omega$ and is continuous up to $\overline{\Omega}$, we have that $|J(\phi)|\equiv 0$ on $\Omega$.  Thus an application of Sard's theorem (see \cite[theorem II 3.1]{Sternberg}) implies that $\phi$ is not surjective, a contradiction.  
\end{proof}

\section{Example}

The converse of Theorem \ref{singular} is not true.  In fact, we construct a highly singular $C^{\infty}$-smooth map $\phi:B(0,1)\rightarrow B(0,1)$ so that $\phi(\overline{B(0,1)})\cap b B(0,1)$ consists of singular points but $C_{\phi}$ is not compact on $A^2(B(0,1))$.  This idea is made precise as this next example shows.

We let $\phi(z_1,z_2):=(z_1,0)$ be the projection onto the first coordinate.  Clearly $\phi$ is singular everywhere and maps the unit ball into the unit ball.  Furthermore, 
$\phi(\overline{B(0,1)})\cap b B(0,1)\neq \emptyset$.  We let $\beta_j=1-\frac{1}{j}$ and define
\[f_j(z_1,z_2):=\frac{\alpha_j}{(z_1-1)^{\beta_j}}\] where $\alpha_j\rightarrow 0$ as $j\rightarrow \infty$ is to be chosen later.  Also, the branch cut is taken away from $B(0,1)$.  We chose $R>0$ sufficiently large so that
$B(0,1)\subset \{z\in \mathbb{C}:|z-1|<R\}\times \{w\in \mathbb{C}:|w|<R\}:=D$.  Thus converting to polar coordinates we have 
\begin{align*}
&\int_{B(0,1)}|(z_1-1)^{-\beta_j}|^2 dV\leq \int_{D}|(z_1-1)^{-\beta_j}|^2 dV\\
&=\pi R^2\int_{0\leq r\leq R}\int_{0\leq \theta\leq 2\pi}r^{-2\beta_j+1}dr d\theta<\infty\\
\end{align*}
Thus $(z_1-1)^{-\beta_j}\in A^2(B(0,1))$ for all $j\in \mathbb{N}$.\\

By convexity of $B(0,1)$, there exists $s>0$, $\theta_1,\theta_2\in [0,2\pi]$, $\theta_2>\theta_1$ so that \[\{z=1+r_1e^{i\theta}:0\leq r_1\leq s,\, \theta_1\leq \theta\leq \theta_2\}\times \{w=r_2e^{i\theta}:0\leq r_2\leq s,\, \theta_1\leq \theta\leq \theta_2\}\subset B(0,1)\]
Using this inclusion and converting to polar coordinates, one can show that
$\|\frac{1}{(z_1-1)^{\beta_j}}\|\rightarrow \infty$ as $j\rightarrow \infty$.\\  

Then we define $\alpha_j:=\|\frac{1}{(z_1-1)^{\beta_j}}\|_{B(0,1)}^{-1}$.
So $\|f_j\|=1$ for all $j\in \mathbb{N}$ and $f_j\rightarrow 0$ uniformly on compact subsets of $B(0,1)$ away from $bB(0,1)$.  Thus by \cite[lemma 3.5]{cuckruhanseveral}, $f_j\rightarrow 0$ weakly as $j\rightarrow \infty$.  However,
$f_j\circ \phi(z_1,z_2)=\frac{\alpha_j}{(z_1-1)^{\beta_j}}$ does not converge to $0$ in norm.  Therefore, $C_{\phi}$ is not compact.

\section{Aknowlegments}
I wish to thank Kit Chan and S\"{o}nmez \c{S}ahuto\u{g}lu for useful conversations and insights.  I also thank Joe Cima for commenting on a preliminary version of this manuscript.

\bibliographystyle{amsalpha}
\bibliography{refscomp}

\end{document}